\newtheorem{theorem}{Theorem}[section]
\newtheorem{lemma}[theorem]{Lemma}
\begin{document}

\begin{center}
{\large\bf On a method of evaluation of zeta-constants based on one number theoretic approach
}\end{center}
\vspace*{-3mm}
\begin{center}
{\large\rm\bf\copyright\,\,2017\,\,\,\,\,\,\,
E.A. Karatsuba$^{*}$}\\
\medskip
{\it\,\, (*119333 \, Moscow, \,\,Vavilova str., 40, 
Research Center ''Computer Science and Control'' of Russian Academy of Sciences)\\
e--mail: ekar@ccas.ru, karacuba@mi.ras.ru}
\end{center}






----------------------------------------------------------------------

\begin{abstract}
New formulas for approximation of zeta-constants were derived on the basis of a number-theoretic approach constructed for the irrationality proof of certain classical constants. Using these formulas it's possible to approximate certain zeta-constants and their combinations by rational fractions and construct a method for their evaluation.
\end{abstract}

\section{Introduction}
The main purpose of this paper is to derive new formulas for approximation of zeta-constants, that is the values of the Riemann zeta function  $\zeta(n)$ with $n\geq 2$, $n$ -- integers, on the basis of a method which is used in the irrationality proofs.

The problem of constructing the methods for approximation and calculation of values of the Riemann zeta function at points of different arithmetic nature, and especially of zeta-constants,  has been considered by many authors (we recall the works: \cite{1}--\cite{9}).
In \cite{10} a new method for approximation of these constants by rational fractions was presented. This method is constructed on the basis of the Hermite-Beukers approach (see \cite{11}-- \cite{13}), which the latter one applied to prove the irrationality of zeta-constants $\zeta(2)$ and $\zeta(3)$ using two especially selected polynomials $P_n(x)$ and $Q_n(x),$ $n\geq 1:$

\begin{equation}\label{eq:0-1}
P_n(x) = \frac{1}{n!}\left(\frac{d}{dx}\right)^n\left(x^n(1-x)^n\right), \  \  \
Q_n(x) = (1-x)^n. 
\end{equation}
Various versions of the Beukers method, its modifications and generalizations are widely used in the study of values of various functions on the irrationality (see, for example, \cite{14}--\cite{19}).

In studying the values of zeta-constants on the basis of Hermite-Beukers' approach and in constructing rational approximations to them the problem arises to derive explicit formulas with coefficients of polynomials participating in integrals. For the case of two polynomials first such explicit formulas with coefficients in the combinations with zeta-constants depending on coefficients of polynomials in the most general canonical form were derived in \cite{10}. The method from  \cite{10} gives a possibility to approximate zeta-constants and some of their combinations by enough simple expressions from rational fractions with coefficients of the polynomials (\ref{eq:0-1}) and calculate them effectively. However for the study of zeta-constants $\zeta(n)$, $n\geq 2$, in full volume, it's necessary to be able to derive such formulas for the case of $n$, $n\geq 2$, polynomials.

The present paper continues the study begun in \cite{10}. 
New explicit formulas for approximation of the values of zeta-constants with participation of three polynomials are obtained. 
The possibility to get such formulas provide combinatorial lemmas \ref{lm:1} and \ref{lm:2}.
An algorithm for calculation of zeta-constants based on new formulas is described. One should mention that method from \cite{10} assumes that both polynomials are needed to provide a reasonably good approximation (like in the Beukers method, see \cite{12}, \cite{13}). The present method with three polynomials opens up a new choice opportunity between two ways:  1) all three polynomials will provide a convergence rate;
2) two polynomials  will provide a convergence rate, and by manipulating the coefficients of the third polynomial, one can reduce the number of calculated rational fractions. 
Taking into account such a possibility, we will consider the third polynomial in the most general canonical form throughout the paper.

Further we use the following standard notations:
a generalized harmonic number $H_n^{(m)}$ of order $m$ is the following sum for positive integers $n$ and $m$:

\begin{equation}\label{eq:1-1}
H_n^{(m)} = \sum_{k=1}^n\frac{1}{k^m},  \  \  H_n^{(1)} = H_n, \  \   H_0^{(m)} =0.
\end{equation}
The properties and methods for evaluation of harmonic numbers are widely studied: see, for example, \cite{20}, \cite{21}.

\section{Auxiliary lemmas}

For $|t| < 1$:

$$
\frac{1}{1-t} = 1 + t + t^2 + \dots = \sum_{k=0}^{\infty} t^k.
$$
Hence

$$
I_3 = \int_0^1\int_0^1\int_0^1\frac{dx dy dz}{1 -xyz} =
\int_0^1\int_0^1\int_0^1\sum_{k=0}^{\infty} x^k y^k z^k dx dy dz =
$$
%
$$
\sum_{k=0}^{\infty}\left(\int_0^1 x^k dx\right)\left(\int_0^1 y^k dy\right)\left(\int_0^1 z^k dz\right) =
\sum_{k=0}^{\infty}\frac{1}{(k+1)^3} = \zeta(3).
$$
%

\begin{lemma}\label{lm:1}
For any $r_1, r_2, r_3 \geq 0;$ $s\geq 3;$  the following relation holds:

$$
I(r_1, r_2, r_3) = \int_0^1\dots\int_0^1\frac{x_1^{r_1}x_2^{r_2}x_3^{r_3}}{1 -x_1x_2x_3\dots x_s}dx_1dx_2dx_3\dots dx_s =
$$
\begin{equation}\label{eq:1-2}
= \sum_{k=0}^{\infty}\frac{1}{(r_1 +k+1)(r_2 +k+1)(r_3 +k+1)(k+1)^{s-3}}.
\end{equation}
\end{lemma}
\begin{proof} For $0< x_1, \dots, x_s <1;$ $0< x_1\cdot x_2\cdot\ldots\cdot x_s <1;$
we have:

$$
\frac{1}{1-x_1\cdot\ldots\cdot x_s} = \sum_{k=0}^{\infty}(x_1\cdot\ldots\cdot x_s)^k,
$$

$$
I(r_1, r_2, r_3) = \int_0^1\dots\int_0^1\frac{x_1^{r_1}x_2^{r_2}x_3^{r_3}}{1 -x_1x_2x_3\dots x_s}dx_1dx_2dx_3\dots dx_s 
$$
$$
=\sum_{k=0}^{\infty}\int_0^1 x_1^{k+r_1}d x_1\int_0^1 x_2^{k+r_2}d x_2\int_0^1 x_3^{k+r_3}d x_3\int_0^1 x_4^{k}d x_4\dots\int_0^1 x_s^{k}d x_s 
$$
$$
= \sum_{k=0}^{\infty}\frac{1}{r_1 +k+1}\cdot\frac{1}{r_2 +k+1}\cdot\frac{1}{r_3 +k+1}\cdot\frac{1}{(k+1)^{s-3}}.
$$
%
\end{proof}

\begin{lemma}\label{lm:2}
For any integer $s\geq 1$ and for any $r\geq 1;$ \ $k\geq 0$, the following identities hold:

\begin{equation}\label{eq:1-3}
\frac{1}{(r+k+1)(k+1)^s} = \sum_{j=1}^{s}\frac{(-1)^{j-1}}{r^j(k+1)^{s+1-j}} + \frac{(-1)^s}{r^s(r+k+1)},
\end{equation}
$$
\frac{1}{(r+k+1)^2(k+1)^s} = \sum_{j=1}^{s}\frac{(-1)^{j-1}j}{r^{j+1}(k+1)^{s+1-j}}
+ \frac{(-1)^ss}{r^{s+1}(r+k+1)} 
$$
\begin{equation}\label{eq:1-4}
 + \frac{(-1)^s}{r^{s}(r+k+1)^2},
\end{equation}
$$
\frac{1}{(r+k+1)^3(k+1)^s} = \sum_{j=1}^{s}\frac{(-1)^{j-1}j(j+1)}{2 r^{j+2}(k+1)^{s+1-j}} + \frac{(-1)^{s}s(s+1)}{2 r^{s+2}(r+k+1)} 
$$
\begin{equation}\label{eq:1-5}
+ \frac{(-1)^{s}s}{r^{s+1}(r+k+1)^2} + \frac{(-1)^{s}}{r^{s}(r+k+1)^3}.
\end{equation}
\end{lemma}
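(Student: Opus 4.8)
The plan is to reduce all three identities to a single elementary partial-fraction step and then to produce the second and third from the first by differentiation in the parameter $r$. Writing $a = r+k+1$ and $b = k+1$, so that $a - b = r$, the basic observation is the splitting $\frac{1}{ab} = \frac1r\bigl(\frac1b - \frac1a\bigr)$, which after multiplication by $b^{-(s-1)}$ becomes the recursion $\frac{1}{a b^s} = \frac{1}{r b^s} - \frac1r \cdot \frac{1}{a b^{s-1}}$.

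First I would establish \eqref{eq:1-3} by induction on $s$. The base case $s=1$ is exactly the splitting above. For the inductive step I substitute the hypothesis for $\frac{1}{ab^{s-1}}$ into the recursion, distribute the factor $-1/r$, and reindex the resulting sum by $j \mapsto j+1$; the leading term $\frac{1}{rb^s}$ supplies the missing $j=1$ summand, while the boundary term picks up one more factor of $-1/r$, producing the sign $(-1)^s$ and the power $r^{-s}$. The only place requiring care is this reindexing, but it is routine. Alternatively, \eqref{eq:1-3} follows in one line from the finite geometric identity $\frac1a = \frac1b\sum_{j=0}^{s-1}(-r/b)^j + \frac{(-r)^s}{a b^s}$ solved for $\frac{1}{ab^s}$.

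Next I would obtain \eqref{eq:1-4} and \eqref{eq:1-5} by differentiating with respect to $r$. Since $b = k+1$ is independent of $r$ while $\frac{\partial}{\partial r}\frac1a = -\frac{1}{a^2}$, we have $\frac{1}{a^2 b^s} = -\frac{\partial}{\partial r}\frac{1}{a b^s}$ and $\frac{1}{a^3 b^s} = -\frac12\frac{\partial}{\partial r}\frac{1}{a^2 b^s}$. Differentiating the right-hand side of \eqref{eq:1-3} term by term, using $\frac{d}{dr}r^{-j} = -j\,r^{-j-1}$ on the summands and the product rule on the boundary term $\frac{(-1)^s}{r^s a}$, reproduces \eqref{eq:1-4} exactly: the numerator $j$ arises from $\frac{d}{dr}r^{-j}$, and the two boundary terms come from differentiating the product $r^{-s}a^{-1}$. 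Repeating the procedure on \eqref{eq:1-4} yields \eqref{eq:1-5}; the summand numerators become $j(j+1)$ (hence the factor $\tfrac12$ after division by $-2$), and the three boundary terms emerge after collecting the two separate contributions to $r^{-(s+1)}a^{-2}$.

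To justify differentiating an identity asserted only for integers, I note that for each fixed $k \geq 0$ both sides of \eqref{eq:1-3} are rational functions of $r$ that agree at all positive integers, hence agree identically as rational functions; their derivatives in $r$ therefore also coincide, so no convergence or analytic subtlety arises. The remaining work is purely bookkeeping of signs and exponents, and the main obstacle, such as it is, is keeping the reindexing in the induction and the sign tracking under differentiation mutually consistent.
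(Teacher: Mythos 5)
Your proof is correct, but it takes a genuinely different route from the paper's on the second and third identities. The paper proves all three formulas by induction on $s$ (it writes out the induction only for \eqref{eq:1-5}, feeding the $s=1$ cases of \eqref{eq:1-3}--\eqref{eq:1-5} into the inductive step), so each identity needs its own separate induction with its own sign and coefficient bookkeeping. You instead prove only \eqref{eq:1-3} directly --- your induction, or equivalently the finite geometric expansion of $1/(r+k+1)$ in powers of $r/(k+1)$, matches the paper's mechanism for that case --- and then obtain \eqref{eq:1-4} and \eqref{eq:1-5} by applying $-\partial/\partial r$ and $-\tfrac12\,\partial/\partial r$ respectively. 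The term-by-term differentiation does reproduce the stated right-hand sides exactly, including the coefficient $j(j+1)/2$ and the three boundary terms of \eqref{eq:1-5}, and your observation that both sides are rational functions of $r$ disposes of any worry about differentiating (indeed the lemma is asserted for all real $r\ge 1$, so the identity already holds on an interval and differentiation is immediate). What your route buys is economy and extensibility: one induction plus repeated differentiation yields the decomposition of $\bigl((r+k+1)^m(k+1)^s\bigr)^{-1}$ for every $m$, with general summand coefficient $\binom{j+m-2}{m-1}$, which is precisely what would be needed to push the paper's construction beyond three polynomials; the paper's approach is more elementary and self-contained but must be redone from scratch for each new power of $r+k+1$.
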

\begin{proof} 
The formulas (\ref{eq:1-3})-- (\ref{eq:1-5}) are proved by induction on $s$. Let's prove, for example,  (\ref{eq:1-5}).
For $s=1$ it's possible to verify directly that the equality is true

$$
\frac{1}{(r+k+1)^3(k+1)} =$$
\begin{equation}\label{eq:1-6}
\frac{1}{r^3(k+1)} - \frac{1}{r^3(r+k+1)} -\frac{1}{r^2(r+k+1)^2}- \frac{1}{r(r+k+1)^3}
\end{equation}
Let
(\ref{eq:1-5}) be true for  $s\leq n,$ and

$$
\frac{1}{(r+k+1)^3(k+1)^n} = \sum_{j=1}^{n}(-1)^{j-1}\frac{j(j+1)}{2r^{j+2}(k+1)^{n+1-j}} + (-1)^{n}\frac{n(n+1)}{2r^{n+2}(r+k+1)} 
$$
\begin{equation}\label{eq:1-7}
+ (-1)^{n}\frac{n}{r^{n+1}(r+k+1)^2} + (-1)^{n}\frac{1}{r^{n}(r+k+1)^3}.
\end{equation}
Let's prove that (\ref{eq:1-5}) is true also for   $s = n+1$. We have from (\ref{eq:1-7})

$$
\frac{1}{(r+k+1)^3(k+1)^{n+1}} = \frac{1}{k+1}\sum_{j=1}^{n}
\frac{(-1)^{j-1}j(j+1)}{2r^{j+2}(k+1)^{n+1-j}} +
$$
%
$$
\frac{(-1)^{n}n(n+1)}{2r^{n+2}(k+1)(r+k+1)}+ \frac{(-1)^{n}n}{r^{n+1}(k+1)(r+k+1)^2} + \frac{(-1)^{n}}{r^{n}(k+1)(r+k+1)^3}.
$$
%
Substituting in 
the last expression  (\ref{eq:1-3}), (\ref{eq:1-4}) for $s= 1$ and  (\ref{eq:1-6}) we find

$$\frac{1}{(r+k+1)^3(k+1)^{n+1}} =
\sum_{j=1}^{n}\frac{(-1)^{j-1}j(j+1)}{2 r^{j+2}(k+1)^{n+2-j}} $$
$$+
\frac{(-1)^{n}n(n+1)}{2 r^{n+2}}\left(\frac{1}{r(k+1)}-\frac{1}{r(r+k+1)}\right)
$$

$$
+ \frac{(-1)^{n}n}{r^{n+1}}\left(\frac{1}{r^2(k+1)}-\frac{1}{r^2(r+k+1)}-\frac{1}{r(r+k+1)^2}\right)
$$

$$
+ \frac{(-1)^{n}}{r^{n}}\left(\frac{1}{r^3(k+1)}-\frac{1}{r^3(r+k+1)}-\frac{1}{r^2(r+k+1)^2}-\frac{1}{r(r+k+1)^3}\right)
$$

$$
= \sum_{j=1}^{n}\frac{(-1)^{j-1}j(j+1)}{2 r^{j+2}(k+1)^{n+2-j}} +
\frac{(-1)^{n}\left(\frac{n(n+1)}{2}+n+1\right)}{r^{n+3}(k+1)}
$$

$$
+ \frac{(-1)^{n+1}\left(\frac{n(n+1)}{2}+n+1\right)}{r^{n+3}(r+k+1)} +
\frac{(-1)^{n+1}(n+1)}{r^{n+2}(r+k+1)^2}
+\frac{(-1)^{n+1}}{r^{n+1}(r+k+1)^3}
$$

$$
= \sum_{j=1}^{n+1}\frac{(-1)^{j-1}j(j+1)}{2 r^{j+2}(k+1)^{n+2-j}} +
\frac{(-1)^{n+1}(n+1)(n+2)}{2 r^{n+3}(r+k+1)}$$
$$
+ \frac{(-1)^{n+1}(n+1)}{r^{n+2}(r+k+1)^2} +
\frac{(-1)^{n+1}}{r^{n+1}(r+k+1)^3},
$$
that is the formula (\ref{eq:1-5}) for $s= n+1.$
\end{proof}

\section{Main formula}

Let $a_0, a_1, \dots, a_n, \  b_0, b_1, \dots, b_n, \ c_0, c_1, \dots, c_n$, $n\geq 1$, be arbitrary numbers, and let 
$P_n(x)$, $Q_n(x)$ and $T_n(x)$ be the polynomials:

\begin{align*}
P_n(x) = a_0 + a_1 x + \dots + a_n x^n,\\
Q_n(x) = b_0 + b_1 x + \dots + b_n x^n,\\
T_n(x) = c_0 + c_1 x + \dots + c_n x^n.
\end{align*}
Multiply the left and right sides of the relation
(\ref{eq:1-2}) to $a_{r_1}b_{r_2}c_{r_3}$ and sum over  $r_1,
r_2, r_3 = 0,1,\dots, n; s\geq 3.$ We get

$$
I_s = I_s(n) = \sum_{r_1=0}^{n}\sum_{r_2=0}^{n}\sum_{r_3=0}^{n}a_{r_1}b_{r_2}c_{r_3} I(r_1,r_2,r_3)$$
$$ = \int_0^1\dots\int_0^1\frac{P_n(x_1)Q_n(x_2)T_n(x_3)}{1-x_1x_2x_3\dots x_s}dx_1 dx_2dx_3\dots dx_s 
$$
\begin{equation}\label{eq:1-10}
= \sum_{k=0}^{\infty}\frac{1}{(k+1)^{s-3}}\sum_{r_1=0}^{n}\sum_{r_2=0}^{n}
\sum_{r_3=0}^{n}\frac{a_{r_1}b_{r_2}c_{r_3}}{(r_1+k+1)(r_2+k+1)(r_3+k+1)}.
\end{equation}
Separating in (\ref{eq:1-10}) the terms 
with $r_1= r_2=r_3 =0,$ then with  $r_1=r_2=0$, 
$r_3\ne 0$;
 $r_2=r_3 =0$, $r_1\ne 0$; $r_1=r_3 =0$, $r_2\ne 0$, 
and finally with  $r_1=0$, $r_2\ne 0, r_3\ne 0$; $r_2=0$, $r_1\ne 0, r_3\ne 0$; $r_3 =0,$ $r_1\ne 0, r_2\ne 0$,
we find
$$
I_s = a_0b_0c_0\zeta(s) + a_0b_0\sum_{k=0}^{\infty}\frac{1}{(k+1)^{s-1}}\sum_{r_3=1}^{n} \frac{c_{r_3}}{r_3+k+1}$$
$$ +
a_0c_0\sum_{k=0}^{\infty}\frac{1}{(k+1)^{s-1}}\sum_{r_2=1}^{n}\frac{b_{r_2}}{r_2+k+1}
 + b_0c_0\sum_{k=0}^{\infty}\frac{1}{(k+1)^{s-1}}\sum_{r_1=1}^{n}\frac{a_{r_1}}{r_1+k+1}$$
 $$
+ a_0\sum_{k=0}^{\infty}\frac{1}{(k+1)^{s-2}}\sum_{r_2=1}^{n}\sum_{r_3=1}^{n}\frac{b_{r_2}c_{r_3}}{(r_2+k+1)(r_3+k+1)}$$ 
$$+ b_0\sum_{k=0}^{\infty}\frac{1}{(k+1)^{s-2}}\sum_{r_1=1}^{n}\sum_{r_3=1}^{n}\frac{a_{r_1}c_{r_3}}{(r_1+k+1)(r_3+k+1)}$$
$$+ c_0\sum_{k=0}^{\infty}\frac{1}{(k+1)^{s-2}}\sum_{r_1=1}^{n}\sum_{r_2=1}^{n}\frac{a_{r_1}b_{r_2}}{(r_1+k+1)(r_2+k+1)}
$$
\begin{equation}\label{eq:1-11}+
\sum_{k=0}^{\infty}\frac{1}{(k+1)^{s-3}}\sum_{r_1=1}^{n}\sum_{r_2=1}^{n}\sum_{r_3=1}^{n}
\frac{a_{r_1}b_{r_2}c_{r_3}}{(r_1+k+1)(r_2+k+1)(r_3+k+1)}.
\end{equation}
We divide the double sums in $r_{\xi},$ $\xi= 1, 2, 3;$ on the right side (\ref{eq:1-11}) into two sums:
the sum in which the summation indices are equal and the sum in which they are not equal, that is, for example,

$$
\sum_{r_1=1}^{n}\sum_{r_2=1}^{n}\frac{a_{r_1}b_{r_2}}{(r_1+k+1)(r_2+k+1)}$$
%
\begin{equation}\label{eq:1-12}
=
\sum_{r_1=1}^{n}\frac{a_{r_1}b_{r_1}}{(r_1+k+1)^2}+
\sum_{\substack{{r_1,r_2=1}\\r_1\ne r_2}}^{n}\frac{a_{r_1}b_{r_2}}{r_2-r_1}\left(\frac{1}{r_1+k+1}-\frac{1}{r_2+k+1}\right).
\end{equation}
In a triple sum over $r_{\xi}$ from (\ref{eq:1-11}), we also separate terms with the same indices, using the relations

$$
\sum_{r_1=1}^{n}\sum_{r_2=1}^{n}\sum_{r_3=1}^{n}
\frac{a_{r_1}b_{r_2}c_{r_3}}{(r_1+k+1)(r_2+k+1)(r_3+k+1)}  = \sum_{r_1=1}^{n}\frac{a_{r_1}b_{r_1}c_{r_1}}{(r_1+k+1)^3}
$$
$$+
\sum_{\substack{{r_1,r_2,r_3=1}\\r_1=r_2\ne r_3}}^{n}\frac{a_{r_1}b_{r_1}c_{r_3}}{(r_1+k+1)^2(r_3+k+1)}+
\sum_{\substack{{r_1,r_2,r_3=1}\\r_1=r_3\ne r_2}}^{n}\frac{a_{r_1}b_{r_2}c_{r_1}}{(r_1+k+1)^2(r_2+k+1)}
$$
$$+
\sum_{\substack{{r_1,r_2,r_3=1}\\r_2=r_3\ne r_1}}^{n}\frac{a_{r_1}b_{r_2}c_{r_2}}{(r_2+k+1)^2(r_1+k+1)}+
\sum_{\substack{{r_1,r_2,r_3=1}\\r_1\ne r_2\ne r_3}}^{n}\frac{a_{r_1}b_{r_2}c_{r_3}}{(r_1+k+1)(r_2+k+1)(r_3+k+1)}.
$$
That is

$$
\sum_{r_1=1}^{n}\sum_{r_2=1}^{n}\sum_{r_3=1}^{n}
\frac{a_{r_1}b_{r_2}c_{r_3}}{(r_1+k+1)(r_2+k+1)(r_3+k+1)}  = \sum_{r_1=1}^{n}\frac{a_{r_1}b_{r_1}c_{r_1}}{(r_1+k+1)^3}
$$
$$+
\sum_{\substack{{r_1,r_2=1}\\r_1\ne r_2}}^{n}\frac{a_{r_1}b_{r_1}c_{r_2}+
a_{r_1}b_{r_2}c_{r_1}+a_{r_2}b_{r_1}c_{r_1}}{(r_1+k+1)^2(r_2+k+1)}$$
$$-
\sum_{\substack{{r_1,r_2,r_3=1}\\r_1\ne r_2\ne r_3}}^{n}\frac{a_{r_1}b_{r_2}c_{r_3}}{(r_1+k+1)(r_3-r_2)}\left(\frac{1}{r_3+k+1} -\frac{1}{r_2+k+1}\right)
.$$
We get from here

$$
\sum_{r_1=1}^{n}\sum_{r_2=1}^{n}\sum_{r_3=1}^{n}
\frac{a_{r_1}b_{r_2}c_{r_3}}{(r_1+k+1)(r_2+k+1)(r_3+k+1)}  = \sum_{r_1=1}^{n}\frac{a_{r_1}b_{r_1}c_{r_1}}{(r_1+k+1)^3}
$$
$$+
\sum_{\substack{{r_1,r_2=1}\\r_1\ne r_2}}^{n}\frac{a_{r_1}b_{r_1}c_{r_2}+
a_{r_1}b_{r_2}c_{r_1}+a_{r_2}b_{r_1}c_{r_1}}{(r_1+k+1)^2(r_2+k+1)}$$
$$-
\sum_{\substack{{r_1,r_2,r_3=1}\\r_1\ne r_3\ne r_2}}^{n}\frac{a_{r_1}b_{r_2}c_{r_3}}{(r_3-r_2)(r_2-r_1)}\left(\frac{1}{r_2+k+1} -\frac{1}{r_1+k+1}\right)
$$
\begin{equation}\label{eq:1-13}
+
\sum_{\substack{{r_1,r_2,r_3=1}\\r_1\ne r_2\ne r_3}}^{n}\frac{a_{r_1}b_{r_2}c_{r_3}}{(r_3-r_2)(r_3-r_1)}\left(\frac{1}{r_3+k+1} -\frac{1}{r_1+k+1}\right).
\end{equation}
Substituting (\ref{eq:1-12}), (\ref{eq:1-13}) into (\ref{eq:1-11}) we find:

$$
I_s = a_0b_0c_0\zeta(s) + 
\sum_{k=0}^{\infty}\sum_{r_1=1}^{n}\frac{a_0b_0c_{r_1}+
a_0c_0b_{r_1}+b_0c_0a_{r_1}}{(k+1)^{s-1}(r_1+k+1)} 
$$
$$
+ \sum_{k=0}^{\infty}\sum_{r_1=1}^{n}
\frac{a_0b_{r_1}c_{r_1}+b_0a_{r_1}c_{r_1}+c_0a_{r_1}b_{r_1}}{(k+1)^{s-2}(r_1+k+1)^2}
$$
$$-
\sum_{k=0}^{\infty}
\sum_{\substack{{r_1,r_2=1}\\r_1\ne r_2}}^{n}\frac{a_0b_{r_1}c_{r_2}+b_0a_{r_2}c_{r_1}+c_0a_{r_1}b_{r_2}}
{(k+1)^{s-2}(r_2-r_1)}\left(\frac{1}{r_2+k+1} -\frac{1}{r_1+k+1}\right) 
$$
$$
+ \sum_{k=0}^{\infty}\sum_{r_1=1}^{n}\frac{a_{r_1}b_{r_1}c_{r_1}}
{(k+1)^{s-3}(r_1+k+1)^3}+
\sum_{k=0}^{\infty}\sum_{\substack{{r_1,r_2=1}\\r_1\ne r_2}}^{n}\frac{a_{r_1}b_{r_1}c_{r_2}+a_{r_1}b_{r_2}c_{r_1}
+a_{r_2}b_{r_1}c_{r_1}}{(k+1)^{s-3}(r_1+k+1)^2(r_2-r_1)}
$$
$$+ \sum_{k=0}^{\infty}\sum_{\substack{{r_1,r_2=1}\\r_1\ne r_2}}^{n}\frac{a_{r_1}b_{r_1}c_{r_2}
+a_{r_1}b_{r_2}c_{r_1}+a_{r_2}b_{r_1}c_{r_1}}{(k+1)^{s-3}(r_2-r_1)^2}\left(\frac{1}{r_2+k+1} -\frac{1}{r_1+k+1}\right)
$$
$$+ \sum_{k=0}^{\infty}
\sum_{\substack{{r_1,r_2,r_3=1}\\r_1\ne r_2\ne r_3}}^{n}\frac{a_{r_1}b_{r_2}c_{r_3}}
{(k+1)^{s-3}}\left(\frac{1}
{(r_3-r_2)(r_3-r_1)(r_3+k+1)}\right.
$$
\begin{equation}\label{eq:1-14}
\left. + \frac{1}{(r_2-r_3)(r_2-r_1)(r_2+k+1)} + \frac{1}{(r_1-r_3)(r_1-r_2)(r_1+k+1)}\right).
\end{equation}
From (\ref{eq:1-14}) we have with $s=3$:

$$
I_3 = a_{0}b_{0}c_{0}\zeta(3) + 
\sum_{k=0}^{\infty}\sum_{r_1=1}^{n}\frac{a_0b_0c_{r_1}
+a_0c_0b_{r_1}+b_0c_0a_{r_1}}{(k+1)^{2}(r_1+k+1)}$$
$$+
\sum_{k=0}^{\infty}\sum_{r_1=1}^{n}
\frac{a_0b_{r_1}c_{r_1}+b_0a_{r_1}c_{r_1}+c_0a_{r_1}b_{r_1}}{(k+1)(r_1+k+1)^2}
$$
$$-
\sum_{k=0}^{\infty}
\sum_{\substack{{r_1,r_2=1}\\r_1\ne r_2}}^{n}\frac{a_0b_{r_1}c_{r_2}+b_0a_{r_2}c_{r_1}
+c_0a_{r_1}b_{r_2}}
{(k+1)(r_2-r_1)}\left(\frac{1}{r_2+k+1} -\frac{1}{r_1+k+1}\right) 
$$
$$
+ \sum_{k=0}^{\infty}\sum_{r_1=1}^{n}
\frac{a_{r_1}b_{r_1}c_{r_1}}
{(r_1+k+1)^3}+
 \sum_{k=0}^{\infty}\sum_{\substack{{r_1,r_2=1}\\r_1\ne r_2}}^{n}\frac{a_{r_1}b_{r_1}c_{r_2}
 +a_{r_1}c_{r_1}b_{r_2}+
 b_{r_1}c_{r_1}a_{r_2}}{(r_1+k+1)^2(r_2-r_1)}
$$
$$+ \sum_{k=0}^{\infty}\sum_{\substack{{r_1,r_2=1}\\r_1\ne r_2}}^{n}\frac{a_{r_1}b_{r_1}c_{r_2}
+a_{r_1}c_{r_1}b_{r_2}+b_{r_1}c_{r_1}a_{r_2}}{(r_2-r_1)^2}\left(\frac{1}{r_2+k+1}- \frac{1}{r_1+k+1}\right)$$
$$
+ \sum_{k=0}^{\infty}
\sum_{\substack{{r_1,r_2,r_3=1}\\r_1\ne r_2\ne r_3}}^{n}\frac{a_{r_1}b_{r_2}c_{r_3}}{(r_3-r_2)(r_3-r_1)}
\left(\frac{1}{r_3+k+1} -\frac{1}{r_1+k+1}\right)
$$
\begin{equation}\label{eq:1-16}
+\sum_{k=0}^{\infty}
\sum_{\substack{{r_1,r_2,r_3=1}\\r_1\ne r_2\ne r_3}}^{n}\frac{a_{r_1}b_{r_2}c_{r_3}}{(r_3-r_2)(r_1-r_2)}
\left(\frac{1}{r_2+k+1}-\frac{1}{r_1+k+1}\right).
\end{equation}
Let's introduce auxiliary notations:

$$ S_{00r_1} = a_0b_0c_{r_1}+a_0c_0b_{r_1}+b_0c_0a_{r_1}, \  \ S_{0r_1r_1} =
a_0b_{r_1}c_{r_1}+b_0a_{r_1}c_{r_1}+c_0a_{r_1}b_{r_1},$$
$$
S_{0r_1r_2} = a_0b_{r_1}c_{r_2}+b_0a_{r_2}c_{r_1}+c_0a_{r_1}b_{r_2},
\ \ S_{r_1r_1r_2} = a_{r_1}b_{r_1}c_{r_2}+a_{r_1}c_{r_1}b_{r_2}+b_{r_1}c_{r_1}a_{r_2}.
$$
Transforming the sums from (\ref{eq:1-16}) with use of lemma \ref{lm:2}, we find that:
\newline
1) for the second term on the right-hand side of (\ref{eq:1-16}) the following relation holds: 

\begin{equation}\label{eq:1-17}
\sum_{k=0}^{\infty}\sum_{r_1=1}^{n}\frac{S_{00r_1}}{(k+1)^{2}(r_1+k+1)}= \zeta(2)\sum_{r_1=1}^n\frac{S_{00r_1}}{r_1} -
\sum_{r_1=1}^n\frac{S_{00r_1}H_{r_1}}{r_1^2};
\end{equation}
\newline
2) for the third term of the same part of (\ref{eq:1-16}) the following equality holds:

\begin{equation}\label{eq:1-18}
\sum_{k=0}^{\infty}\sum_{r_1=1}^{n}\frac{S_{0r_1r_1}}{(k+1)(r_1+k+1)^2}=\sum_{r_1=1}^n\frac{S_{0r_1r_1}H_{r_1}}{r_1^2} -
\sum_{r_1=1}^n\frac{S_{0r_1r_1}}{r_1}\left(\zeta(2)-H^{(2)}_{r_1}\right);
\end{equation}
\newline
3) besides, we have:
\begin{equation}\label{eq:1-19}
\sum_{k=0}^{\infty}\sum_{r_1=1}^{n}
\frac{a_{r_1}b_{r_1}c_{r_1}}{(r_1+k+1)^3} = \sum_{r_1=1}^{n}a_{r_1}b_{r_1}c_{r_1}\left(\zeta(3)-H^{(3)}_{r_1}\right);
\end{equation}
\newline
4) as well we find:
$$
\sum_{k=0}^{\infty}\sum_{\substack{{r_1,r_2=1}\\r_1\ne r_2}}^{n}\frac{S_{0r_1r_2}}
{r_2-r_1}\left(\frac{1}{(k+1)(r_2+k+1)} -\frac{1}{(k+1)(r_1+k+1)}\right)
$$
\begin{equation}\label{eq:1-20}
= \sum_{\substack{{r_1,r_2=1}\\r_1\ne r_2}}^{n}\frac{S_{0r_1r_2}}
{r_2-r_1}\left(\frac{H_{r_2}}{r_2} -\frac{H_{r_1}}{r_1}\right);
\end{equation}
\newline
5) and finally we notice, that
\begin{equation}\label{eq:1-21}
\sum_{k=0}^{\infty}\sum_{\substack{{r_1,r_2=1}\\r_1\ne r_2}}^{n}\frac{S_{r_1r_1r_2}}
{(r_1+k+1)^2(r_2-r_1)}=
\sum_{\substack{{r_1,r_2=1}\\r_1\ne r_2}}^{n}\frac{S_{r_1r_1r_2}}
{r_2-r_1}\left(\zeta(2) -H^{(2)}_{r_1}\right).
\end{equation}
Substituting (\ref{eq:1-17})--(\ref{eq:1-21}) into (\ref{eq:1-16}) and doing the obvious transformations, we get

$$
I_3= \zeta(3)\sum_{r_1=0}^{n}a_{r_1}b_{r_1}c_{r_1} + \zeta(2)\left(\sum_{\substack{{r_1,r_2=1}\\r_1\ne r_2}}^{n}\left(\frac{S_{00r_1}-S_{0r_1r_1}}{r_1} + 
\frac{S_{r_1r_1r_2}}
{r_2-r_1}\right)\right)
$$
$$+ \sum_{\substack{{r_1,r_2,r_3=1}\\r_1\ne r_2\ne r_3}}^{n}\left(-a_{r_1}b_{r_1}c_{r_1}H^{(3)}_{r_1}+\left(\frac{S_{0r_1r_1}}{r_1} - 
\frac{S_{r_1r_1r_2}}{r_2-r_1}\right)H^{(2)}_{r_1}+\frac{S_{0r_1r_1}-S_{00r_1}}{r_1^2} H_{r_1}\right.$$
$$-
\frac{S_{0r_1r_2}}{r_2-r_1}\left(\frac{H_{r_2}}{r_2} -\frac{H_{r_1}}{r_1}\right)- \frac{S_{r_1r_1r_2}}{(r_2-r_1)^2}\left(H_{r_2}-H_{r_1}\right)-a_{r_1}b_{r_2}c_{r_3}*
$$
\begin{equation}\label{eq:1-23}
\left.
\left(\frac{H_{r_1}}{(r_1-r_3)(r_1-r_2)}+ \frac{H_{r_2}}{(r_2-r_1)(r_2-r_3)}+ \frac{H_{r_3}}{(r_3-r_1)(r_3-r_2)}\right)\right).
\end{equation}
where $H^{m}_{r_{\xi}}, \ \xi = 1, 2, 3; m= 1, 2, 3$, are harmonic numbers, that is sums, defined by (\ref{eq:1-1}).
Similarly, we find for $s=4$ from (\ref{eq:1-14}), that

$$
I_4 = a_{0}b_{0}c_{0}\zeta(4) + 
\sum_{k=0}^{\infty}\sum_{r_1=1}^{n}\frac{S_{00r_1}}{(k+1)^{3}(r_1+k+1)} +
\sum_{k=0}^{\infty}\sum_{r_1=1}^{n}
\frac{S_{0r_1r_1}}{(k+1)^2(r_1+k+1)^2}
$$
$$-
\sum_{k=0}^{\infty}
\sum_{\substack{{r_1,r_2=1}\\r_1\ne r_2}}^{n}\frac{S_{0r_1r_2}}
{(k+1)^2(r_2-r_1)}\left(\frac{1}{r_2+k+1} -\frac{1}{r_1+k+1}\right) 
$$
$$
+ \sum_{k=0}^{\infty}\sum_{r_1=1}^{n}
\frac{a_{r_1}b_{r_1}c_{r_1}}
{(k+1)(r_1+k+1)^3}+
 \sum_{k=0}^{\infty}\sum_{\substack{{r_1,r_2=1}\\r_1\ne r_2}}^{n}\frac{S_{r_1r_1r_2}}{(k+1)(r_1+k+1)^2(r_2-r_1)}
$$
$$+ \sum_{k=0}^{\infty}\sum_{\substack{{r_1,r_2=1}\\r_1\ne r_2}}^{n}\frac{S_{r_1r_1r_2}}{(r_2-r_1)^2(k+1)}\left(\frac{1}{r_2+k+1} - \frac{1}{r_1+k+1}\right)
$$
$$+ \sum_{k=0}^{\infty}
\sum_{\substack{{r_1,r_2,r_3=1}\\r_1\ne r_2\ne r_3}}^{n}\frac{a_{r_1}b_{r_2}c_{r_3}}{k+1}\left(\frac{1}{(r_3-r_2)(r_3-r_1)(r_3+k+1)}\right.
$$
\begin{equation}\label{eq:1-24}\left. +
\frac{1}{(r_3-r_2)(r_1-r_2)(r_2+k+1)} +
\frac{1}{(r_1-r_3)(r_1-r_2)(r_1+k+1)}\right).
\end{equation}
Transforming the sums from (\ref{eq:1-24}) on the basis of the lemma \ref{lm:2}, we get:

$$
I_4 = a_{0}b_{0}c_{0}\zeta(4) + \zeta(3)\sum_{r_1=1}^n\frac{S_{00r_1}-a_{r_1}b_{r_1}c_{r_1}}{r_1}+\zeta(2)*
$$
$$
 \sum_{\substack{{r_1,r_2=1}\\r_1\ne r_2}}^{n}\left(\frac{2S_{0r_1r_1}-S_{00r_1}-a_{r_1}b_{r_1}c_{r_1}}{r_1^2}- \frac{S_{0r_1r_2}}{r_2-r_1}\left(\frac{1}{r_2}-\frac{1}{r_1}\right)+ \frac{S_{r_1r_1r_2}}{r_1(r_2-r_1)}\right)$$
$$
+ \sum_{r_1=1}^n\frac{S_{00r_1}-2S_{0r_1r_1}
+a_{r_1}b_{r_1}c_{r_1}}{r_1^3}H_{r_1}+
\sum_{r_1=1}^n\frac{a_{r_1}b_{r_1}c_{r_1}-S_{0r_1r_1}}{r_1^2}H^{(2)}_{r_1}
$$ 
$$
+ \sum_{r_1=1}^n\frac{a_{r_1}b_{r_1}c_{r_1}}{r_1}H^{(3)}_{r_1}+ \sum_{\substack{{r_1,r_2=1}\\r_1\ne r_2}}^{n}\frac{S_{0r_1r_2}}
{r_2-r_1}\left(\frac{H_{r_2}}{r_2^2} -\frac{H_{r_1}}{r_1^2}\right)$$
$$+
\sum_{\substack{{r_1,r_2=1}\\r_1\ne r_2}}^{n}\frac{S_{r_1r_1r_2}}
{(r_2-r_1)r_1^2}H_{r_1}+ \sum_{\substack{{r_1,r_2=1}\\r_1\ne r_2}}^{n}\frac{S_{r_1r_1r_2}}
{(r_2-r_1)r_1}H^{(2)}_{r_1}$$
$$+
\sum_{\substack{{r_1,r_2=1}\\r_1\ne r_2}}^{n}\frac{S_{r_1r_1r_2}}
{(r_2-r_1)^2}\left(\frac{H_{r_2}}{r_2} -\frac{H_{r_1}}{r_1}\right)+ \sum_{\substack{{r_1,r_2,r_3=1}\\r_1\ne r_2\ne r_3}}^{n}a_{r_1}b_{r_2}c_{r_3}*
$$
\begin{equation}\label{eq:1-26}
\left. 
\left(\frac{H_{r_1}}{r_1(r_1-r_2)(r_1-r_3)}+ \frac{H_{r_2}}{r_2(r_2-r_1)(r_2-r_3)}+ \frac{H_{r_3}}{r_3(r_3-r_1)(r_3-r_2)}\right)\right).
\end{equation}
For any $s\geq 5$ we transform the terms (\ref{eq:1-14}) on the basis of the lemma  \ref{lm:2}. Carrying out algebraic calculations similar to the above ones, we get 
for  $s\geq 5$:

$$
I_s = a_{0}b_{0}c_{0}\zeta(s) + 
\left(\sum_{r_1=1}^{n}
\frac{S_{00r_1}}{r_1}\right)\zeta(s-1)$$
$$ +
\left(\sum_{\substack{{r_1,r_2=1}\\r_1\ne r_2}}^{n}\frac{S_{0r_1r_1}-S_{00r_1}}{r_1^2} -\frac{S_{0r_1r_2}}
{r_2-r_1}\left(\frac{1}{r_2} -\frac{1}{r_1}\right)\right)\zeta(s-2) 
$$
$$
+ \sum_{j=3}^{s-4}(-1)^{j-1}\left(\sum_{\substack{{r_1,r_2,r_3=1}\\r_1\ne r_2\ne r_3}}^{n}\left(\frac{S_{00r_1}-(j-1)S_{0r_1r_1}
+\frac{(j-2)(j-1)}{2}a_{r_1}b_{r_1}c_{r_1}}{r_1^{j}}\right.\right.
$$
$$+
\frac{S_{r_1r_1r_2}}{r_2-r_1}\left(\frac{j-2}{r_1^{j-1}}+\frac{1}{r_2-r_1}\left(\frac{1}{r_2^{j-2}}-\frac{1}{r_1^{j-2}}\right)\right)- 
\frac{S_{0r_1r_2}}
{r_2-r_1}\left(\frac{1}{r_2^{j-1}} -\frac{1}{r_1^{j-1}}\right) $$
$$+
a_{r_1}b_{r_2}c_{r_3}
\left(\frac{1}{(r_3-r_2)(r_3-r_1)r_3^{j-2}}+ \frac{1}{(r_2-r_3)(r_2-r_1)r_2^{j-2}}\right.$$
$$\left.\left.\left. + 
\frac{1}{(r_1-r_3)(r_1-r_2)r_1^{j-2}}\right)\right)\right)\zeta(s-j)+
$$
$$
(-1)^{s-3}
\left(\sum_{\substack{{r_1,r_2,r_3=1}\\r_1\ne r_2\ne r_3}}^{n}\left(\frac{a_{r_1}b_{r_1}c_{r_1}\left(1-\frac{(s-5)(s-4)}{2}\right)-
S_{00r_1}+(s-4)S_{0r_1r_1}}{r_1^{s-3}}\right.\right.
$$
$$+
\frac{S_{0r_1r_2}}{r_2-r_1}\left(\frac{1}{r_2^{s-4}}-\frac{1}{r_1^{s-4}}\right)- \frac{S_{r_1r_1r_2}}{r_2-r_1}\left(\frac{s-5}{r_1^{s-4}}+\frac{1}{r_2-r_1}\left(\frac{1}{r_2^{s-5}}-\frac{1}{r_1^{s-5}}\right)\right)
$$
$$
- a_{r_1}b_{r_2}c_{r_3}
\left(\frac{1}{(r_3-r_2)(r_3-r_1)r_3^{s-5}}+ 
\frac{1}{(r_2-r_3)(r_2-r_1)r_2^{s-5}}\right.$$
$$\left.\left.\left. +
\frac{1}{(r_1-r_3)(r_1-r_2)r_1^{s-5}}\right)\right)\right)\zeta(3)
$$
$$
+ (-1)^{s-2}\left(\sum_{\substack{{r_1,r_2,r_3=1}\\r_1\ne r_2\ne r_3}}^{n}\left(\frac{(s-2)S_{0r_1r_1}
-S_{00r_1}-\frac{(s-2)(s-3)}{2}a_{r_1}b_{r_1}c_{r_1}}{r_1^{s-2}}\right.\right.
$$
$$+
\frac{S_{0r_1r_2}}{r_2-r_1}\left(\frac{1}{r_2^{s-3}}-\frac{1}{r_1^{s-3}}\right)-
\frac{S_{r_1r_1r_2}}{r_2-r_1}
\left(\frac{s-3}{r_1^{s-3}}+
\frac{1}{r_2-r_1}
\left(\frac{1}{r_2^{s-4}}-\frac{1}{r_1^{s-4}}\right)\right)$$
$$+
a_{r_1}b_{r_2}c_{r_3}
\left(\frac{1}{(r_3-r_2)(r_3-r_1)r_3^{s-4}}+ \frac{1}{(r_2-r_3)(r_2-r_1)r_2^{s-4}}\right.$$
$$\left.\left.\left.\left.+ \frac{1}{(r_1-r_3)(r_1-r_2)r_1^{s-4}}\right)\right)\right)\right)\zeta(2)
$$
$$
+ (-1)^{s-2}\sum_{\substack{{r_1,r_2,r_3=1}\\r_1\ne r_2\ne r_3}}^{n}\left(\left(\frac{-(s-2)S_{0r_1r_1}+
S_{00r_1}+\frac{(s-2)(s-3)}{2}a_{r_1}b_{r_1}c_{r_1}}{r_1^{s-1}}\right.\right.$$
$$\left.+
\frac{(s-3)S_{r_1r_1r_2}}{(r_2-r_1)r_1^{s-2}}\right)H_{r_1}-
\frac{S_{0r_1r_2}}{r_2-r_1}\left(\frac{H_{r_2}}{r_2^{s-2}}-\frac{H_{r_1}}{r_1^{s-2}}\right)+$$
$$
\frac{S_{r_1r_1r_2}}{(r_2-r_1)^2}\left(\frac{H_{r_2}}{r_2^{s-3}}-\frac{H_{r_1}}{r_1^{s-3}}\right)
+ a_{r_1}b_{r_2}c_{r_3}\left(\frac{H_{r_1}}
{(r_1-r_2)(r_1-r_3)r_1^{s-3}}\right.$$
$$\left.\left.
+ \frac{H_{r_2}}
{(r_2-r_1)(r_2-r_3)r_2^{s-3}}+
\frac{H_{r_3}}
{(r_3-r_2)(r_3-r_1)r_3^{s-3}}\right)\right)$$
$$
+ (-1)^{s-3}\sum_{\substack{{r_1,r_2=1}\\r_1\ne r_2}}^{n}\left(\frac{S_{0r_1r_1}
-(s-3)a_{r_1}b_{r_1}c_{r_1}}
{r_1^{s-2}}
- \frac{S_{r_1r_1r_2}}{(r_2-r_1)r_1^{s-3}}\right)H^{(2)}_{r_1}
$$
\begin{equation}\label{eq:1-34}
 +
(-1)^{s-4}\sum_{r_1=1}^{n}a_{r_1}b_{r_1}c_{r_1}
\frac{H^{(3)}_{r_1}}{r_1^{s-3}}.
\end{equation}
Formulae (\ref{eq:1-23}), (\ref{eq:1-26}), (\ref{eq:1-34}) are representable in a more convenient for observation and computation form, grouping the factors for the same zeta-constants and replacing the sums with the non-coinciding indices by double and triple sums. Thus we have proved the following theorem:

\begin{theorem}\label{th:1}
Let $P_n(x),$ $Q_n(x)$ and $T_n(x)$ be three polynomials of degree $n,$ $n\geq 1;$

\begin{equation}\label{eq:1-35}
P_n(x) = a_0 + a_1 x + \dots + a_n x^n,
\end{equation}
\begin{equation}\label{eq:1-36}
Q_n(x) = b_0 + b_1 x + \dots + b_n x^n;
\end{equation}
\begin{equation}\label{eq:1-37}
T_n(x) = c_0 + c_1 x + \dots + c_n x^n;
\end{equation}
$a_0, a_1, \dots, a_n, \ b_0, b_1, \dots, b_n, \ c_0, c_1, \dots, c_n$ --
arbitrary numbers. Define the integral $I_s$, \ $s\geq 3$, as 

\begin{equation}\label{eq:1-38}
I_s = I_s(n) = \int_0^1\dots\int_0^1\frac{P_n(x_1)Q_n(x_2)T_n(x_3)}{1-x_1x_2x_3\dots x_s}dx_1 dx_2dx_3\dots dx_s.
\end{equation}
Then the following relations hold:

\begin{align}
\begin{split}
I_3 &= A_{s-2,3}\zeta(3) - A_{s-2,2}\zeta(2) - A_{s-2},\\ 
I_4 &= A_{s-3,4}\zeta(4) + A_{s-3,3}\zeta(3) - 
A_{s-3,2}\zeta(2) - A_{s-3},\\ 
\dots &\dots\dots\dots\dots\dots\dots\dots\dots\dots\dots\dots\dots\dots\dots\dots\dots\dots  \\
I_s &= A_{1,s}\zeta(s) + A_{1,s-1}\zeta(s-1) + \dots +  A_{1,3}\zeta(3) - A_{1,2}\zeta(2) - A_{1},\\
\end{split}
\label{eq:1-39}
\end{align}
%
where for $s= 5, 6, 7, \dots$,

\begin{equation}\label{eq:1-40}
A_{s-2,3} = \sum_{r=1}^{n}a_{r}b_{r}c_{r},  \  \  \  A_{s-2,2} = 
\sum_{r=1}^{n}\sum_{l=0}^{r-1}\frac{S_{rrl}- S_{llr}}{r-l} 
\end{equation}
$$
A_{s-2} = \sum_{r=1}^{n}a_{r}b_{r}c_{r}H^{(3)}_{r} - \sum_{r=1}^{n}\sum_{l=0}^{r-1}(S_{rrl}- S_{llr})\left(\frac{H^{(2)}_{r}-H^{(2)}_{l}}{r-l} + \frac{H_{r}-H_{l}}{(r-l)^2}\right)$$
\begin{equation}\label{eq:1-41}
+
\sum_{r=2}^{n}\sum_{l=1}^{r-1}\sum_{i=0}^{l-1}
\left(S_{irl}+ S_{ilr}\right)
\left(\frac{H_{i}}{(i-r)(i-l)}+
\frac{H_{r}}{(r-i)(r-l)} + \frac{H_{l}}{(l-i)(l-r)}\right),
\end{equation}
\begin{equation}\label{eq:1-42}
A_{s-3,4} = a_{0}b_{0}c_{0},  \  \  \  A_{s-3,3} = 
\sum_{r=1}^{n}\frac{S_{00r}- a_{r}b_{r}c_{r}}{r}, 
\end{equation}
%
%
%
$$
A_{s-3,2} = \sum_{r=1}^{n}\frac{a_{r}b_{r}c_{r}+S_{00r}-2S_{0rr}}{r^2}+
$$
\begin{equation}\label{eq:1-43}
 \sum_{r=2}^{n}\sum_{l=1}^{r-1}\left(\frac{S_{0rl}+ S_{0lr}}{r-l}
\left(\frac{1}{r} - \frac{1}{l}\right) -\frac{1}{r-l}\left(\frac{S_{rrl}}{r}- 
\frac{S_{llr}}{l}\right)\right),
\end{equation}
$$
A_{s-3} = \sum_{r=1}^{n}\left(\frac{2 S_{0rr} -a_rb_rc_r-S_{00r}}{r^3}H_{r}+
\frac{S_{0rr}- a_{r}b_{r}c_{r}}{r^2}H^{(2)}_{r} -
\frac{a_{r}b_{r}c_{r}}{r}H^{(3)}_{r}\right)$$
$$- \sum_{r=2}^{n}\sum_{l=1}^{r-1}\left(\frac{S_{rrl}- S_{llr}}{(r-l)^2}
\left(\frac{H_{r}}{r} - \frac{H_{l}}{l}\right) +\frac{1}{r-l}\left(S_{rrl}\frac{H_{r}}{r^2}- 
S_{llr}\frac{H_{l}}{l^2}\right)\right.$$
$$\left.+\frac{1}{r-l}\left(S_{rrl}\frac{H^{(2)}_{r}}{r}- 
S_{llr}\frac{H^{(2)}_{l}}{l}\right)\right)-
\sum_{r=2}^{n}\sum_{l=1}^{r-1}\sum_{i=0}^{l-1}
(S_{irl}+ S_{ilr})\left(\frac{H_{i}}{i(i-r)(i-l)} \right.
$$
\begin{equation}\label{eq:1-44}
\left. + \frac{H_{r}}{r(r-i)(r-l)} + \frac{H_{l}}{l(l-i)(l-r)}\right),
\end{equation}
\begin{equation}\label{eq:1-45}
A_{1,s} = a_{0}b_{0}c_{0},  \  \  \  A_{1,s-1} = 
\sum_{r=1}^{n}\frac{S_{00r}}{r}, 
\end{equation}
\begin{equation}\label{eq:1-46}
A_{1,s-2} = 
\sum_{r=1}^{n}\frac{S_{0rr}-S_{00r}}{r^2} - 
\sum_{r=2}^{n}\sum_{l=1}^{r-1}\frac{S_{0lr}+S_{0rl}}{r-l}\left(\frac{1}{r}-\frac{1}{l}\right), 
\end{equation}
for $j= 3, 4,\dots, s-2:$
$$
A_{1,s-j} = (-1)^{j-1}\left(\sum_{r=1}^{n}\frac{S_{00r}-(j-1)S_{0rr} +\frac{(j-2)(j-1)}{2}a_{r}b_{r}c_{r}}{r^j}\right.$$
$$-
\sum_{r=2}^{n}\sum_{l=1}^{r-1}\left(\frac{S_{llr}- S_{rrl}}{(r-l)^2}
\left(\frac{1}{r^{j-2}} - \frac{1}{l^{j-2}}\right)+
\frac{j-2}{r-l}\left(\frac{S_{llr}}{l^{j-1}}- \frac{ 
S_{rrl}}{r^{j-1}}\right)\right)
$$
%
$$
- \sum_{r=2}^{n}\sum_{l=1}^{r-1}\sum_{i=0}^{l-1}
(S_{irl}+ S_{ilr})
\left(\frac{H_{i}}{i^{j-2}(i-r)(i-l)}\right.
$$
\begin{equation}\label{eq:1-47}\left.\left.
+\frac{H_{r}}{r^{j-2}(r-i)(r-l)} + \frac{H_{l}}{l^{j-2}(l-i)(l-r)}\right)\right),
\end{equation}
$$
A_{1} = (-1)^{s-1}\left(\sum_{r=1}^{n}
\left(\frac{-S_{00r}+(s-2)S_{0rr} -\frac{(s-2)(s-3)}{2}a_rb_rc_r}{r^{s-1}}H_{r}\right.\right.$$
$$\left. +
\frac{(s-3)a_{r}b_{r}c_{r}- S_{0rr}}{r^{s-2}}H^{(2)}_{r}+
\frac{a_{r}b_{r}c_{r}}{r^{s-3}}H^{(3)}_{r}\right)$$
$$-
\left(\sum_{r=2}^{n}\sum_{l=1}^{r-1}
\frac{1}{r-l}\left(\frac{H^{(2)}_{l}S_{llr}}{l^{s-3}}- \frac{H^{(2)}_{r}
S_{rrl}}{r^{s-3}}\right)\right.
$$
$$+\left.
\frac{s-3}{r-l}\left(\frac{H_{l}S_{llr}}{l^{s-2}}- 
\frac{H_{r}S_{rrl}}{r^{s-2}}\right)+\frac{S_{llr}- S_{rrl}}{(r-l)^2}
\left(\frac{H_{r}}{r^{s-3}} - \frac{H_{l}}{l^{s-3}}\right)\right)
$$
%
$$
-
\sum_{r=2}^{n}\sum_{l=1}^{r-1}\sum_{i=0}^{l-1}
(S_{irl}+ S_{ilr})\left(\frac{H_{i}}{i^{s-3}(i-r)(i-l)}\right.
$$
\begin{equation}\label{eq:1-48}\left.\left.
+ \frac{H_{r}}{r^{s-3}(r-i)(r-l)} + \frac{H_{l}}{l^{s-3}(l-i)(l-r)}\right)\right),
\end{equation}
where for $0\leq\mu\leq n;$ $0\leq\nu\leq n;$ $0\leq\lambda\leq n;$

\begin{equation}\label{eq:1-49}
S_{\mu\nu\lambda} = a_{\mu}b_{\nu}c_{\lambda} + b_{\mu}c_{\nu}a_{\lambda} + c_{\mu}a_{\nu}b_{\lambda},
\end{equation}
$H_n^{(m)}$ are harmonic numbers defined by (\ref{eq:1-1}).

\end{theorem}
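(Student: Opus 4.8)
The plan is to follow the chain of identities assembled before the statement, making explicit where each auxiliary lemma enters and how the final repackaging into the coefficients $A_{\cdot,\cdot}$ is performed. First I would invoke the geometric expansion of $1/(1-x_1\cdots x_s)$ on the open cube, legitimate since $0<x_1\cdots x_s<1$ there, and integrate term by term as in Lemma \ref{lm:1}; the interchange of sum and integral is justified by the nonnegativity of the summands. Multiplying relation \eqref{eq:1-2} by $a_{r_1}b_{r_2}c_{r_3}$ and summing over all three indices produces the representation \eqref{eq:1-10} of $I_s$ as a single sum over $k$ of a triple coefficient sum weighted by $1/(k+1)^{s-3}$.

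Next I would isolate the contributions according to which of $r_1,r_2,r_3$ vanish, obtaining \eqref{eq:1-11}: the all-zero term contributes $a_0b_0c_0\zeta(s)$ directly, and the remaining groups carry inner double and triple sums over the nonzero indices. To these I would apply the partial-fraction identities \eqref{eq:1-12} and \eqref{eq:1-13}, which split each product $1/\prod(r_\xi+k+1)$ into a diagonal part (equal indices, producing the higher powers $(r+k+1)^2$ and $(r+k+1)^3$) and off-diagonal parts (distinct indices, producing simple poles with rational prefactors in the differences $r_\xi-r_\eta$). Substituting back into \eqref{eq:1-11} yields the master formula \eqref{eq:1-14}, valid for every $s\geq 3$.

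The heart of the argument is then the summation over $k$ in \eqref{eq:1-14}, for which Lemma \ref{lm:2} is exactly tailored: its three identities \eqref{eq:1-3}--\eqref{eq:1-5} convert each term of the form $1/\bigl((r+k+1)^m(k+1)^{t}\bigr)$, with $m\in\{1,2,3\}$ and $t$ the ambient exponent, into a finite linear combination of $(k+1)^{-p}$ and $(r+k+1)^{-q}$. Summing the resulting elementary series via $\sum_{k\geq 0}(k+1)^{-p}=\zeta(p)$ and $\sum_{k\geq 0}(r+k+1)^{-q}=\zeta(q)-H_r^{(q)}$ collapses every inner sum into zeta-constants and tails built from $H_r$, $H_r^{(2)}$, $H_r^{(3)}$. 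Specializing to $s=3$ and $s=4$ gives \eqref{eq:1-23} and \eqref{eq:1-26}, while $s\geq 5$ gives \eqref{eq:1-34}.

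Finally I would reorganize these expressions by collecting, for each $j$, the total coefficient of $\zeta(s-j)$ and of the bare tail, which defines $A_{1,s-j}$, $A_1$ and their $A_{s-2,\cdot}$, $A_{s-3,\cdot}$ analogues in \eqref{eq:1-40}--\eqref{eq:1-48}. The last step rewrites the symmetric ``unequal indices'' sums as ordered sums over $r>l$ and $r>l>i$; this is precisely where the symmetric combinations $S_{\mu\nu\lambda}$ of \eqref{eq:1-49} become forced, since symmetrizing the summand over the three polynomial slots is what permits passing from $\sum_{r_1\ne r_2}$ to $\sum_{r>l}$ with combined numerators such as $S_{rrl}$ and $S_{0rl}+S_{0lr}$. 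I expect the genuine difficulty to be purely in this bookkeeping: matching the integer and sign coefficients produced by the multiple-pole decompositions of Lemma \ref{lm:2} (the factors $j$, $\tfrac{j(j+1)}{2}$, $s-3$, and so on) against those displayed in \eqref{eq:1-40}--\eqref{eq:1-48}, and verifying that the symmetrization remains consistent across all index-coincidence patterns.
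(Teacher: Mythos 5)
Your proposal reproduces the paper's own derivation step for step: the geometric expansion and term-by-term integration of Lemma \ref{lm:1}, summation against $a_{r_1}b_{r_2}c_{r_3}$ to reach \eqref{eq:1-10} and \eqref{eq:1-11}, the partial-fraction splittings \eqref{eq:1-12}--\eqref{eq:1-13} giving the master formula \eqref{eq:1-14}, reduction of the $k$-sums via Lemma \ref{lm:2} separately for $s=3$, $s=4$ and $s\geq 5$, and the final regrouping of the unequal-index sums into ordered sums with the symmetrized $S_{\mu\nu\lambda}$, which is exactly how the paper arrives at \eqref{eq:1-40}--\eqref{eq:1-48}. The only point worth stating more carefully is that the individually divergent pieces $\sum_k(k+1)^{-1}$ and $\sum_k(r+k+1)^{-1}$ produced by Lemma \ref{lm:2} must be kept paired so that only the convergent combinations such as $\sum_{k\ge 0}\bigl(\tfrac{1}{k+1}-\tfrac{1}{r+k+1}\bigr)=H_r$ are summed, as the paper does implicitly in \eqref{eq:1-17}--\eqref{eq:1-21}.
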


\section{New formulas for approximation of zeta-constants}

Note, the theorem \ref{th:1} formulas have the most general form, the coefficients $a_0, a_1, \dots, a_n$; $ b_0, b_1, \dots, b_n$; $c_0, c_1, \dots, c_n$ are arbitrary numbers.

To approximate effectively a zeta-constant
$\zeta(s)$, \ $s\geq 3$, on the basis of formulas
(\ref{eq:1-39})--(\ref{eq:1-49}), one should select such polynomials (\ref{eq:1-35})--(\ref{eq:1-37}) which could provide a smallness of the absolute value of integral (\ref{eq:1-38}), for example of the order of $2^{-Cn}$, \ $C=const\geq 1$.
We assume that polynomials (\ref{eq:1-35}), (\ref{eq:1-36}), (\ref{eq:1-37}) are chosen  in such a way that $I_s$ is small, 

\begin{equation}\label{eq:1-50}
I_s = I_s(n)=\theta_s(n) =\theta_s.
\end{equation}
Then we have the system:

\[
\begin{array}{ccccc}
A_{1, s}\zeta(s)+A_{1, s-1}\zeta(s-1)&+\dots+A_{1, 3}\zeta(3)&=&A_{1, 2}\zeta(2)+ A_1 +\theta_s\\
A_{2, s-1}\zeta(s-1)&+\dots+A_{2, 3}\zeta(3)&=&A_{2, 2}\zeta(2)+ A_2+\theta_{s-1}\\
\dots\dots\dots\dots&\dots\dots\dots\dots&\dots\dots\dots\dots&\dots\dots\dots\dots\\
&A_{s-2, 3}\zeta(3)&=&A_{s-2, 2}\zeta(2)+ A_{s-2}+\theta_{3}\\
\end{array}
,\]
where the coefficients $A_{i,j}$ are defined by (\ref{eq:1-40})--(\ref{eq:1-49}) for $s-2$ values $I_r$, $r= 3, 4, \dots, s.$ 
The value $\zeta(s)$ is expressed by the ratio of determinants:

$$
\Delta = A_{1, s}A_{2, s-1}\dots A_{s-2, 3}, \  \  \  \  \zeta(s) = \frac{\Delta_s}{\Delta},
$$
wherein

\[\mathbf{\Delta_s}=\begin{vmatrix}
A_{1, 2}\zeta(2)+ A_1+\theta_s & A_{1, s-1} & \dots & \dots & \dots & A_{1, 3}\\
A_{2, 2}\zeta(2)+ A_2+\theta_{s-1} & A_{2, s-1} & \dots & \dots & \dots & A_{2, 3}\\
\dots & \dots & \dots & \dots & \dots & \dots\\
A_{s-2, 2}\zeta(2)+ A_{s-2}+\theta_3 & 0 & \dots & \dots & \dots & A_{s-2, 3}\\
\end{vmatrix} =
\mathbf{\Delta_s^{(1)}+\Delta_s^{(2)}+\Delta_s^{(3)}},
\]
where

\[\mathbf{\Delta_s^{(1)}}=\begin{vmatrix}
A_{1, 2}\zeta(2) & A_{1, s-1} & \dots  & A_{1, 3}\\
A_{2, 2}\zeta(2) & A_{2, s-1} & \dots  & A_{2, 3}\\
\dots & \dots & \dots &  \dots \\
A_{s-2, 2}\zeta(2) & 0 & \dots &  A_{s-2, 3}\\
\end{vmatrix}
\]

\[\mathbf{\Delta_s^{(2)}}=\begin{vmatrix}
A_1 & A_{1, s-1} & \dots  & A_{1, 3}\\
A_2 & A_{2, s-1} & \dots  & A_{2, 3}\\
\dots & \dots & \dots &  \dots \\
A_{s-2} & 0 & \dots &  A_{s-2, 3}\\
\end{vmatrix}, \  \  \
\mathbf{\Delta_s^{(3)}}=\begin{vmatrix}
\theta_s & A_{1, s-1} & \dots  & A_{1, 3}\\
\theta_{s-1} & A_{2, s-1} &  \dots & A_{2, 3}\\
\dots & \dots & \dots &  \dots\\
\theta_{3} & 0 &  \dots & A_{s-2, 3}\\
\end{vmatrix}.
\]
Thus,

$$
\zeta(s) = \frac{\Delta_s}{\Delta} = \frac{1}{\Delta}\sum_{\nu=1}^{s-2}
\left(A_{\nu, 2}\zeta(2) + A_{\nu} + \theta_{s+1-\nu}\right)\Delta_{\nu s},
$$
where $\Delta_{\nu s}$ --- the corresponding algebraic complements,
i.e.

\begin{equation}\label{eq:1-51}
\zeta(s) = \frac{\zeta(2)}{\Delta}\sum_{\nu=1}^{s-2}A_{\nu, 2}\Delta_{\nu s} + \frac{1}{\Delta}\sum_{\nu=1}^{s-2}A_{\nu}\Delta_{\nu s} + \frac{1}{\Delta}\sum_{\nu=1}^{s-2}\theta_{s+1-\nu}\Delta_{\nu s}.
\end{equation}

\section{Approximation}

Now we estimate the values of $\theta_{s},$ $s\geq 3$, from (\ref{eq:1-50}), (\ref{eq:1-51}) for especially chosen polynomials.
Like in \cite{10}, as main polynomials providing the approximation accuracy we take two polynomials (\ref{eq:0-1}): the shifted Legendre polynomial as $P_n(x)$, the binomial polynomial as $Q_n(x)$.
The third polynomial $T_n(x)$ will be written in the canonical form. We have

\begin{lemma}\label{lm:3}
Let

\begin{equation}\label{eq:1-60}
P_n(x) = \frac{1}{n!}\left(\frac{d}{dx}\right)^n\left(x^n(1-x)^n\right)= a_0 + a_1 x + \dots + a_n x^n,
\end{equation}

\begin{equation}\label{eq:1-61}
a_r = \frac{(-1)^r(n+r)!}{(r!)^2(n-r)!},
\end{equation}

\begin{equation}\label{eq:1-62}
Q_n(x) = (1-x)^n =b_0 + b_1 x + \dots + b_n x^n;
\end{equation}

\begin{equation}\label{eq:1-63}
b_r = (-1)^r\frac{n!}{r!(n-r)!}.
\end{equation}
Let
\begin{equation}\label{eq:1-64}
T_n(x) = c_0 + c_1 x + \dots + c_n x^n; \   \   \   c^* = \max_{0\leq i\leq n}|c_i|;
\end{equation}
Then for the integral

$$
I_s = I_s(n) = \int_0^1\dots\int_0^1\frac{P_n(x_1)Q_n(x_2)T_n(x_3)}{1-x_1x_2x_3\dots x_s}dx_1 dx_2 dx_3\dots dx_s, \  \  s\geq 3,
$$
the following estimate is valid
\begin{equation}\label{eq:1-66}
|I_s| \leq \frac{c^*}{2^{2n}}.
\end{equation}
\end{lemma}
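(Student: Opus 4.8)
\emph{Proof proposal.} The plan is to exploit the Rodrigues-type structure of $P_n$ exactly as in the Hermite--Beukers estimates. First I would insert (\ref{eq:1-60}), i.e. $P_n(x_1)=\tfrac{1}{n!}(d/dx_1)^n\big(x_1^n(1-x_1)^n\big)$, fix all variables except $x_1$, set $u=x_2x_3\cdots x_s$, and integrate by parts $n$ times in $x_1$ inside $\int_0^1 P_n(x_1)\,dx_1/(1-x_1u)$. Since $x_1^n(1-x_1)^n$ together with its first $n-1$ derivatives vanishes at both endpoints, all boundary terms drop, and from $(d/dx_1)^n(1-x_1u)^{-1}=n!\,u^n(1-x_1u)^{-(n+1)}$ I obtain
\[
I_s=(-1)^n\int_0^1\!\!\cdots\!\int_0^1 \frac{[x_1(1-x_1)]^n\,[x_2(1-x_2)]^n\,(x_3\cdots x_s)^n\,T_n(x_3)}{(1-x_1x_2\cdots x_s)^{n+1}}\,dx_1\cdots dx_s ,
\]
where the second factor $[x_2(1-x_2)]^n$ appears by absorbing $x_2^n$ from $u^n$ into $Q_n(x_2)=(1-x_2)^n$. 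The presence of two factors $x_i(1-x_i)$, each at most $1/4$ on $[0,1]$, is precisely what produces the $2^{-2n}=4^{-n}$ decay.

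Passing to absolute values, I would then discard the inert variables: since $x_4\cdots x_s\le1$ we have $x_1\cdots x_s\le x_1x_2x_3$, hence $(1-x_1x_2\cdots x_s)^{-(n+1)}\le(1-x_1x_2x_3)^{-(n+1)}$, while $(x_4\cdots x_s)^n\le1$; integrating the now-decoupled $x_4,\dots,x_s$ over $[0,1]$ contributes a factor $1$. This reduces the estimate to the triple integral of $[x_1(1-x_1)]^n[x_2(1-x_2)]^n x_3^n|T_n(x_3)|\,(1-x_1x_2x_3)^{-(n+1)}$. Expanding $(1-x_1x_2x_3)^{-(n+1)}=\sum_{k\ge0}\binom{n+k}{n}(x_1x_2x_3)^k$ and integrating term by term (legitimate by Tonelli, every integrand being nonnegative) gives
\[
|I_s|\le\sum_{k\ge0}\binom{n+k}{n}B_k^{\,2}\int_0^1 x_3^{\,n+k}|T_n(x_3)|\,dx_3,\qquad B_k=\int_0^1 x^{n+k}(1-x)^n\,dx .
\]

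Three elementary estimates then close the argument. First, $B_k=\int_0^1 x^{k}[x(1-x)]^n\,dx\le 4^{-n}\!\int_0^1 x^k\,dx=4^{-n}/(k+1)$, using $x(1-x)\le1/4$. Second, $\binom{n+k}{n}B_k=\frac{((n+k)!)^2}{k!\,(2n+k+1)!}=\Big(\prod_{j=1}^n\frac{k+j}{n+k+j}\Big)\frac{1}{2n+k+1}\le\frac{1}{2n+k+1}$. Third, $\int_0^1 x_3^{\,n+k}|T_n(x_3)|\,dx_3\le c^*\sum_{i=0}^n\frac{1}{n+k+i+1}\le\frac{c^*(n+1)}{n+k+1}$, using $|T_n(x)|\le c^*\sum_{i=0}^n x^i$. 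Writing $\binom{n+k}{n}B_k^{\,2}=B_k\cdot\binom{n+k}{n}B_k$ and combining, $|I_s|\le c^*(n+1)\,4^{-n}\sum_{k\ge0}\frac{1}{(k+1)(n+k+1)(2n+k+1)}$. Finally, since $n+k+1\ge n+1$ the prefactor satisfies $(n+1)/(n+k+1)\le1$, leaving $\sum_{k\ge0}\frac{1}{(k+1)(2n+k+1)}=\frac{1}{2n}\sum_{k\ge0}\big(\frac{1}{k+1}-\frac{1}{2n+k+1}\big)=\frac{H_{2n}}{2n}$, and $H_{2n}\le n+\tfrac12<2n$ for $n\ge1$, so this factor is below $1$. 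Hence $|I_s|\le c^*4^{-n}=c^*/2^{2n}$.

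The routine points are the vanishing of the boundary terms in the $n$-fold integration by parts and the Tonelli interchange. The genuinely load-bearing step, and the one I would be most careful with, is extracting the decay with the \emph{correct clean constant}: a naive use of $|P_n|\le1$ destroys the decay, while pulling both factors $4^{-n}$ outside pointwise before integrating produces a divergent integral at the corner $x_i\to1$, and even the combined pointwise bound on the full kernel leaves a spurious logarithmic factor. The decisive choice is therefore the order of operations---expand the geometric series first and only then estimate the Beta integrals $B_k$---which is exactly what makes the constants collapse to give $c^*/2^{2n}$ rather than $c^*\,(\log n)/2^{2n}$.
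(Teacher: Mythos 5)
Your proof is correct, but it takes a genuinely different route from the paper's. The paper never forms the Beukers kernel $(1-x_1\cdots x_s)^{-(n+1)}$: it expands the original kernel as a geometric series and computes the moments $\int_0^1x^kP_n(x)\,dx$, $\int_0^1x^kQ_n(x)\,dx$, $\int_0^1x^kT_n(x)\,dx$ separately --- the vanishing of the $P_n$-moments for $k<n$, which is the moment-level counterpart of your $n$-fold integration by parts, starts the sum at $k=n$ --- arriving at the exact single-sum representation (\ref{eq:1-68}). The decay $2^{-2n}$ is then extracted at the very end from classical identities: the Gamma-quotient bound $\Gamma^2(n+j+1)/\bigl(\Gamma(2n+j+1)\Gamma(j+1)\bigr)<1$ proved via an infinite product (you establish the equivalent inequality $\binom{n+k}{n}B_k\le\frac{1}{2n+k+1}$ by a finite product instead), the summation formula $\sum_{j}B(x,y+j)=B(x-1,y)$, and the duplication-type relation $B(n,n)=2^{1-2n}B(\tfrac12,n)$. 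You instead integrate by parts $n$ times against the kernel itself, discard the inert variables $x_4,\dots,x_s$, pull the factor $4^{-n}$ out of each Beta integral from the pointwise bound $x(1-x)\le\tfrac14$, and close with a telescoping harmonic sum; all your intermediate estimates check out (in particular $H_{2n}\le n+\tfrac12<2n$ for $n\ge1$, so the remaining factor is indeed below $1$). Your version is more elementary and self-contained, needing no Gamma-product or Beta-summation identities; the paper's version yields in passing the exact series for $I_s$, which carries more information than the bound alone. Both arrive at $|I_s|\le c^*2^{-2n}$.
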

\begin{proof}
From (\ref{eq:1-62}), (\ref{eq:1-63}) we have for $k\geq 1$ 

$$
\int_0^1 x^kQ_n(x)dx = \int_0^1 x^k(1-x)^n dx = B(k+1, n+1) = \frac{k!n!}{(k+n+1)!},
$$
where $B(x,y)$ --- the Euler beta function  (see, for example, \cite{22}, \cite{23}), $B(x,y)=\frac{\Gamma(x)\Gamma(y)}{\Gamma(x+y)},$ where $\Gamma(x+1)= x\Gamma(x)$ is the Euler gamma function.
At the same time, from (\ref{eq:1-60}), (\ref{eq:1-61}) we find by integration by parts:

$$
\int_0^1 x^kP_n(x)dx = \frac{1}{n!}\int_0^1 x^k\frac{d}{dx}\left(\frac{d^{n-1}}{dx^{n-1}}\left(x^n(1-x)^n\right)\right)dx =
$$
$$= (-1)^n\frac{k(k-1)\dots(k-n+1)}{n!}B(k+1,n+1)= (-1)^n\frac{k(k-1)\dots(k-n+1)}{(k+1)\dots(k+n)(k+n+1)},
$$
for $k\geq n$. If $k<n$, then (see \cite{10} for details)

$$
\int_0^1 x^kP_n(x)dx = 0.
$$
Thus, we have with the selected
$P_n(x)$ and $Q_n(x)$:

$$
\int_0^1 x^kQ_n(x)dx = B(k+1, n+1), \  \  \  \  \  \  \  \  \  \  \  \  \  \  \  \  \ \ \ \ \ \ \ \ \ \ \ \ \ \ \ \ \ \ \ \ \ \ \ \ \ \ \ \ \ \ \     
$$

\[
\int_0^1 x^kP_n(x)dx =\begin{cases}
0, &\text{if $k\leq n-1$}\\
(-1)^n\frac{k(k-1)\dots(k-n+1)}{n!}B(k+1,n+1), &\text{if $k\geq n,$}
\end{cases}
\]
$$
\int_0^1 x^kT_n(x)dx = \sum_{i=0}^n\frac{c_i}{k+1+i} = T(k, n). \  \  \  \  \  \  \  \  \  \  \  \  \  \  \  \  \ \ \ \ \ \ \ \ \ \ \ \ \ \ \ \ \ \ \ \ \ \ \ \ \ \ \      
$$
Consequently

$$
I_s = I_s(n) = \int_0^1\dots\int_0^1\frac{P_n(x_1)Q_n(x_2)T_n(x_3)}{1-x_1x_2x_3\dots x_s}dx_1 dx_2dx_3\dots dx_s=
$$
$$
= \sum_{k=0}^{\infty}\int_0^1\dots\int_0^1(x_1x_2\dots x_s)^kP_n(x_1)Q_n(x_2)T_n(x_3)dx_1 dx_2 dx_3\dots dx_s =
$$
\begin{equation}\label{eq:1-68}
=(-1)^n\sum_{k=n}^{\infty}\frac{k(k-1)\dots(k-n+1)}{n!}\frac{B^2(k+1, n+1)T(k, n)}{(k+1)^{s-3}}.
\end{equation}
We find from (\ref{eq:1-68}):

$$
\left|I_s\right|\leq c^*\sum_{j=0}^{\infty}\frac{(n+j)!}{j!n!}\frac{B^2(n+1+j, n+1)}{(n+1+j)^{s-3}}
$$
\begin{equation}\label{eq:1-69}
\leq c^*\sum_{j=0}^{\infty}\frac{B(n+1+j, n+1)}{(2n+j+1)(n+1+j)^{s-3}}\frac{\Gamma^2(n+j+1)}{\Gamma(2n+j+1)\Gamma(j+1)}.
\end{equation}
Since (see, for example, \cite{22})

$$
\frac{\Gamma(\alpha)\Gamma(\beta)}{\Gamma(\alpha+\gamma)\Gamma(\beta-\gamma)}=
\prod_{\nu=0}^{\infty}\left(1+\frac{\gamma}{\alpha+\nu}\right)\left(1-\frac{\gamma}{\beta+\nu}\right),
$$
then

$$
\frac{\Gamma^2(n+j+1)}{\Gamma(2n+j+1)\Gamma(j+1)}=
\prod_{\nu=0}^{\infty}\left(1-\left(\frac{n}{n+1+j+\nu}\right)^2\right) < 1
$$
for any $n\geq 1$.  Consequently  
from (\ref{eq:1-69}) for any $s\geq 3$ follows the estimate

$$
|I_s| \leq c^*\sum_{j=0}^{\infty}\frac{B(n+1+j, n+1)}{(2n+j+1)(n+1+j)^{s-3}} \leq \frac{c^*}{2}\sum_{j=0}^{\infty}B(n+1+j, n+1).$$
Taking into account (see \cite{23}) that $\sum_{j=0}^{\infty}B(x, y+j) = B(x-1, y)$
and corresponding estimates for beta functions (see, for example, \cite{22}, see also \cite{10}), we find from here
$$
|I_s|\leq \frac{c^*}{2}B(n, n+1)\leq \frac{c^*}{4}B(n, n)\leq
\frac{c^*}{4}
2^{1-2n}B\left(\frac 12, n\right)\leq c^*{2^{-2n}},
$$
that is the estimate (\ref{eq:1-66}). 
\end{proof}

\end{document}